\documentclass[letterpaper, 10pt, conference]{ieeeconf}
\IEEEoverridecommandlockouts 
\usepackage{amsmath,amssymb,url}
\usepackage{graphicx}
\usepackage{color}
\usepackage{cleveref}
\usepackage{booktabs}
\usepackage{tikz}
\usetikzlibrary{arrows.meta,calc}
\usepackage{subcaption}

\renewcommand{\H}{\mathsf{H}}
\newcommand{\X}{\mathsf{X}}

\newcommand{\G}{\ensuremath{\mathsf{G}}}

\renewcommand{\Re}{\ensuremath{\mathbb{R}}}

\newcommand{\pair}[1]{\ensuremath{\left\langle #1 \right\rangle}}

\newcommand{\Q}{\ensuremath{\mathsf{Q}}}

\newtheorem{theorem}{Theorem}

\title{\LARGE \bf Uncertainty Propagation in Stochastic Hybrid Systems with Dimension-Varying Resets}

\author{%
	Tejaswi K. C.\thanks{Tejaswi K. C., Mechanical and Aerospace Engineering, George Washington University, Washington, DC 20052. {\tt kctejaswi999@gmail.com}} and %
    Taeyoung Lee \thanks{Taeyoung Lee, Mechanical and Aerospace Engineering, George Washington University, Washington, DC 20052. {\tt tylee@gwu.edu}}%
    \thanks{The research was partially supported by AFOSR under the award MURI FA9550-23-1-0400.}
}

\newtheorem{definition}{Definition}

\graphicspath{{./figs/}}

\begin{document}
%\raggedright

\allowdisplaybreaks

\maketitle \thispagestyle{empty} \pagestyle{empty}

\begin{abstract}
	This paper studies probability density evolution for stochastic hybrid systems with reset maps that change the dimension of the continuous state across modes.
	Existing Frobenius--Perron formulations typically represent reset-induced probability transfer through boundary conditions, which is insufficient when resets map guard sets into the interior or onto lower-dimensional subsets of another mode.
	We develop a weak-form formulation in which reset-induced transfer is represented by the pushforward of probability flux across the guard, yielding a unified description for such systems.
	The proposed framework naturally captures both cases: when the reset decreases dimension, the transferred probability appears as an interior source density, whereas when the reset increases dimension, it generally appears as a singular source supported on a lower-dimensional subset.
	The approach is illustrated using a stochastic hybrid model in which two particles merge into one and later split back into two, demonstrating how dimension-changing resets lead to source terms beyond classical boundary-condition-based formulations.
\end{abstract}

\section{Introduction}

Hybrid systems integrate continuous-time evolution with discrete transitions, providing a unified framework for modeling complex systems that exhibit continuous and event-driven dynamics.
To incorporate uncertainty, stochastic hybrid system formulations have been developed.
The General Stochastic Hybrid System (GSHS) framework~\cite{bujorianu2006toward} models continuous evolution via mode-dependent stochastic differential equations, while discrete transitions occur either upon reaching guard sets or randomly according to a Poisson process.
Post-jump states are described by a stochastic kernel.
This framework has been applied in a wide range of domains~\cite{blom2009rare,pakdaman2010fluid,hespanha2004stochastic}.

For hybrid systems, uncertainty propagation is significantly more challenging due to the interplay between continuous dynamics and discrete events.
For deterministic hybrid systems, the Frobenius--Perron operator has been used to describe the evolution of densities~\cite{oprea2024study}.
For GSHS with spontaneous jumps, spectral methods have been developed to solve the associated hybrid Fokker--Planck equations~\cite{wanleea19,wanleesjads22}, with applications to Bayesian estimation.
More recently, it has been shown that different stochastic hybrid system formulations arise depending on how randomness is introduced in the continuous evolution and discrete transitions, leading to distinct dynamical behaviors~\cite{CClaLeePISNCS25}.

Despite these advances, existing approaches to uncertainty propagation in stochastic hybrid systems face a fundamental limitation.
Most existing formulations implicitly rely on transitions between modes whose continuous state spaces share the same dimension, or can be embedded into a common-dimensional representation.
Under this setting, probability transfer can be expressed through density mappings or boundary conditions within a standard Fokker--Planck framework.

However, many hybrid systems exhibit transitions between modes of different dimensions.
In such cases, probability mass is transferred across spaces of different dimensions, and the resulting source term may no longer be representable as a regular density.
Instead, it may become singular or concentrated on lower-dimensional sets, depending on the geometry of the reset map.
Consequently, existing density-based formulations do not provide a unified and geometrically consistent description of probability flow across modes.

In this paper, we develop a unified formulation of the hybrid Fokker--Planck equation based on a flux-driven, measure-theoretic perspective.
The key idea is to characterize the transfer of probability mass between modes through the pushforward of boundary flux induced by the probability current.
This leads to a general expression in which the source term in each mode is obtained by mapping boundary flux through the reset mechanism.
The resulting formulation naturally accommodates both smooth density contributions and singular components, depending on the relative dimensions of the interacting spaces.

Building on this formulation, we derive a consistent numerical scheme that preserves the flux-based structure of the hybrid dynamics.
In particular, the proposed finite-volume discretization directly approximates the probability current and its normal component at interfaces, ensuring that probability mass is transferred between modes in a conservative and physically consistent manner.

The proposed framework is illustrated through a representative example in which two particles evolve on a line and collapse into a single particle upon reaching a guard set, which may split into two parts again.
This example highlights how boundary flux in one mode induces either a smooth density or a singular source term in another mode, depending on the geometry of the reset map.

In summary, this paper develops a hybrid Fokker--Planck formulation for uncertainty propagation that consistently captures probability transport across modes with potentially different state dimensions, together with a conservative numerical scheme that preserves the underlying flux structure.

\section{Stochastic Hybrid Systems}

In this section, we formulate stochastic hybrid systems whose continuous state space may vary in dimension across discrete modes.
The presentation is restricted to Euclidean spaces for clarity.

\subsection{Stochastic Hybrid Systems}

Let $\Q=\{1,2,\ldots,N_Q\}$ be a finite set of discrete modes.
For each $q\in\Q$, the continuous state evolves in a domain
\begin{align}
    \X_q \subset \Re^{n_q},
\end{align}
where the dimension $n_q$ may depend on the mode.
The hybrid state space is defined as the disjoint union
\begin{align}
    \H = \bigsqcup_{q\in\Q} \X_q,
\end{align}
and a hybrid state is denoted by $(x,q)$ with $x\in\X_q$.

In each mode $q\in\Q$, the continuous state evolves according to the stochastic differential equation given by
\begin{align}
    dx = X_q(x)\,dt + \sigma_q\, dW_q,
    \label{eqn:SDE_mode}
\end{align}
where $X_q:\X_q\to\Re^{n_q}$ is a smooth vector field, $\sigma_q>0$ is a constant diffusion coefficient, and $W_q(t)$ is a standard Wiener process in $\Re^{n_q}$.

Each mode $q\in\Q$ is equipped with a guard set
\[
	\G_q \subset \X_q,
\]
which is a measurable subset of the boundary at which discrete transitions are triggered.
While guards are often codimension-one subsets of the boundary, the framework also allows lower-dimensional or degenerate cases, depending on the geometry of the reset mechanism.

Specifically, when the hybrid state $(x,q)$ reaches the guard $\G_q$, a discrete transition occurs instantaneously.
The post-transition state is determined by a reset map
\begin{align}
    \Phi: \bigsqcup_{q\in\Q} \G_q \to \H,
\end{align}
such that the hybrid state immediately after the jump is
\begin{align}
    (x^+, q^+) = \Phi(x, q).
\end{align}
for any $(x,q)\in \G_q$.
It is considered that the reset map is measurable.

\subsection{Dimension-Varying Reset Maps}

A key feature of this formulation is that the reset map may change the dimension of the continuous state.
More explicitly, for $(x,q)\in\G_q$, the dimension of the continuous state after the jump $n_{q^+}$ may not be identical to $n_q$.
This includes several important cases:
\begin{itemize}
    \item \textbf{Dimension reduction:} the state is mapped to a lower-dimensional space (e.g., merging or projection),
    \item \textbf{Dimension expansion:} the state is lifted to a higher-dimensional space (e.g., splitting or reconstruction),
    \item \textbf{Dimension-preserving reset:} the state remains in a space of the same dimension.
\end{itemize}

Next, for each mode $q\in\Q$, we assume that the boundary of $\X_q$ is decomposed as
\begin{align}
    \partial \X_q = \G_q \cup \Gamma_q, \qquad \G_q \cap \Gamma_q = \emptyset,
\end{align}
where $\G_q$ is the guard set that triggers discrete transitions, and $\Gamma_q$ is a reflecting boundary.
On $\Gamma_q$, the stochastic dynamics are constrained so that there is no probability flux across the boundary, or more precisely, the normal component of the probability current vanishes, ensuring that the continuous evolution remains within $\X_q$ between guard events.

On the guard $\G_q$, the process undergoes an instantaneous reset and does not persist on the guard.
Accordingly, the probability density $p: \Re\times \H\rightarrow\Re$ of the hybrid state has zero trace on the guard, or equivalently,
\begin{align}
    p(t,x,q) = 0, \qquad x\in\G_q,
\end{align}
and probability mass reaching $\G_q$ is transferred to other modes via the reset map $\Phi$.
\subsection{Hybrid Evolution}

The evolution of the stochastic hybrid system consists of alternating phases of continuous stochastic flow and discrete transitions:
\begin{itemize}
    \item Between guard events, the state evolves continuously according to \eqref{eqn:SDE_mode} within the current mode.
    \item When the state reaches a guard set $\G_q$, it is instantaneously reset by $\Phi$ and the mode may change.
\end{itemize}

This framework captures stochastic hybrid systems in which continuous diffusion processes interact with deterministic jump mechanisms, including transitions across state spaces of different dimensions.

\section{Stochastic Koopman Operator and Generator}

In this section, we formulate the stochastic Koopman operator for hybrid systems with dimension-varying reset maps and derive its infinitesimal generator.
The stochastic Koopman operator describes the evolution of observables along trajectories of a stochastic hybrid system, mapping each function to its expected future value.
Its infinitesimal generator characterizes the instantaneous rate of change of this expectation, combining the effects of continuous stochastic dynamics and discrete transitions.

\subsection{Koopman Operator}

\begin{definition}
Let $z_t = (x_t, q_t)$ denote the hybrid stochastic process evolving on $\H$.
The stochastic Koopman operator $\mathcal{K}_t$ is defined by
\begin{align}
    \mathcal{K}_t f(x,q) = \mathbb{E}\big[f(x_t, q_t)\mid (x_0,q_0)=(x,q)\big], \end{align}
for any measurable function $f:\H\to\Re$.
\end{definition}
Here $\mathcal{K}_t f(x,q)$ represents the expected value of the observable $f$ along trajectories of the hybrid system at $t$ when initialized with $(x,q)$.

\subsection{Infinitesimal Generator}

\begin{theorem}
Consider the stochastic hybrid system defined in Section II.
Let $f:\H\to\Re$ be a measurable function, and define
\[
	u(t,x,q) = \mathcal{K}_t f(x,q).
\]
Then, for each mode $q\in\Q$, the function $u$ satisfies
\begin{align}
    \frac{\partial u(t,x,q)}{\partial t} &= \mathcal{A}_q u(t,x,q), \qquad x \in \X_q \setminus \G_q, \label{eqn:koopman_pde}
\end{align}
with initial condition $u(0,x,q)=f(x,q)$, where the generator $\mathcal{A}_q$ is given by
\begin{align}
    \mathcal{A}_q u(x,q) = X_q(x)\cdot \nabla u(x,q) + \frac{\sigma_q^2}{2}\,\Delta u(x,q). \label{eqn:koopman_generator}
\end{align}

Moreover, on the guard $\G_q$, the Koopman observable satisfies the compatibility condition, given by
\begin{align}
    u(t,x,q) = u\big(t,\Phi(x,q)\big), \qquad x\in\G_q.
    \label{eqn:koopman_bc}
\end{align}
\end{theorem}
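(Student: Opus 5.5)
The plan is to split the argument into an interior part, which is classical Kolmogorov backward theory for a diffusion killed at the guard, and a guard part, which comes from the strong Markov property at the first hitting time of $\G_q$. Throughout, $f$ is taken bounded and regular enough, for instance continuous, that $u$ is $C^{1,2}$ away from $\G_q$. In the interior this regularity follows from uniform ellipticity, since $\sigma_q>0$, together with standard parabolic smoothing.

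\textbf{Step 1: the interior generator.} Fix $(x,q)$ with $x\in\X_q\setminus\G_q$. Let
\[
\tau=\inf\{t\ge 0:\ x_t\in\G_q\}
\]
be the first guard-hitting time. Before $\tau$ the process is the reflected diffusion \eqref{eqn:SDE_mode}, with reflection on $\Gamma_q$. Apply the Markov property at a small time $h$:
\[
u(t+h,x,q)=\mathbb{E}\big[u(t,x_h,q_h)\big].
\]
Split this expectation on the events $\{\tau>h\}$ and $\{\tau\le h\}$. Because $x$ lies at positive distance from $\G_q$, the estimate $\mathbb{P}(\tau\le h)=o(h)$ holds; it follows from exponential tails of the Gaussian increments. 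On $\{\tau>h\}$, apply It\^o's formula to $u(t,\cdot,q)$:
\[
\mathbb{E}\big[u(t,x_h,q)\big]-u(t,x,q)=\mathbb{E}\int_0^h \mathcal{A}_q u(t,x_s,q)\,ds+\text{(martingale term)} .
\]
The quadratic variation of $\sigma_q W_q$ produces the term $\tfrac{\sigma_q^2}{2}\Delta u$, and the drift produces $X_q\cdot\nabla u$. Divide by $h$ and let $h\to 0$ to obtain \eqref{eqn:koopman_pde} with the generator \eqref{eqn:koopman_generator}. Setting $t=0$ gives $u(0,x,q)=\mathbb{E}[f(x_0,q_0)]=f(x,q)$.

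For points on $\Gamma_q$, the reflection adds a local-time term of the form $\nabla u\cdot n\,dL_t$. Its vanishing gives a Neumann condition $\partial_n u=0$. This condition is consistent with, though not part of, the statement, so I only remark on it.

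\textbf{Step 2: the compatibility condition.} Take $x\in\G_q$. By the model assumption the reset is instantaneous: the process started at $(x,q)$ immediately jumps to $\Phi(x,q)$. That is, $z_{0^+}=\Phi(x,q)$ almost surely, and no time elapses during the jump. The strong Markov property at the jump time, which is $0$, then gives
\[
u(t,x,q)=\mathbb{E}\big[f(z_t)\mid z_0=\Phi(x,q)\big]=u\big(t,\Phi(x,q)\big),
\]
which is \eqref{eqn:koopman_bc}.

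As a consistency check, approach the guard from the interior. Write
\[
u(t,y,q)=\mathbb{E}\big[f(z_t)\mathbf{1}_{\tau>t}\big]+\mathbb{E}\big[u(t-\tau,\Phi(x_\tau,q))\mathbf{1}_{\tau\le t}\big].
\]
As $y\to x\in\G_q$, the hitting time satisfies $\tau\to0$ and the hitting location satisfies $x_\tau\to x$ in probability, because regular boundary points are regular for the diffusion. This shows that the interior solution attains the boundary value $u(t,\Phi(x,q))$ continuously, provided $u$ is continuous on the target mode.

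\textbf{Main obstacle.} The delicate point is the regularity and well-posedness of the hybrid process and of $u$ when $\Phi$ maps into a space of different dimension. Possible failures include Zeno behaviour, for example if $\Phi(x,q)$ lands back in a guard, and discontinuity of $u$ across the dimension change. I would assume non-Zeno behaviour: either $\Phi(\G_q)$ avoids guards, or the number of jumps in finite time is almost surely finite. I would then build the process by concatenating killed diffusions, so that each piece is a standard stopped SDE and the strong Markov property holds for the concatenation. The dimension change enters only through evaluating $u$ at the point $\Phi(x,q)$. It therefore poses no analytic difficulty for the backward equation, in contrast to the forward (density) equation, where it produces the singular sources.
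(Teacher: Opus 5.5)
Your proposal is correct and follows the same two-part route as the paper. The interior equation is the Kolmogorov backward equation for the mode-$q$ diffusion, which the paper simply invokes via the standard diffusion generator and you sketch via the Markov property, It\^o's formula and the $o(h)$ bound on early guard hitting; the compatibility condition comes, exactly as in the paper, from identifying the process started at $(x,q)$ with $x\in\G_q$ with the one started at $\Phi(x,q)$ (for $t>0$), and your added regularity and non-Zeno hypotheses, the Neumann remark on $\Gamma_q$, and the continuity check from the interior just make explicit what the paper leaves implicit.
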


\begin{proof}
Fix $(x,q)\in\X_q \setminus \G_q$.
Between discrete transitions, the process evolves according to the stochastic differential equation \eqref{eqn:SDE_mode}.
Applying the standard generator of diffusion processes in $\Re^{n_q}$ yields
\begin{align}
    \mathcal{A}_q u = X_q\cdot \nabla u + \frac{\sigma_q^2}{2}\Delta u,
\end{align}
which gives \eqref{eqn:koopman_generator}.
The evolution equation \eqref{eqn:koopman_pde} follows from the definition of the infinitesimal generator of the stochastic Koopman operator~\cite{oksendal2003stochastic}.

Next, consider $x\in\G_q$.
By definition of the hybrid system, when the process reaches $(x,q)$, it is instantaneously reset to $\Phi(x,q)$.
Therefore, for any $t>0$,
\begin{align}
	u(t,x,q) &= \mathbb{E}\big[f(z_t)\mid z_0=(x,q)\big] \nonumber \\
			 &= \mathbb{E}\big[f(z_t)\mid z_0=\Phi(x,q)\big] \nonumber \\
			 &= u\big(t,\Phi(x,q)\big),
\end{align}
which establishes \eqref{eqn:koopman_bc}.
\end{proof}

The operator $\mathcal{A}_q$ corresponds to the standard infinitesimal generator of a diffusion process in $\Re^{n_q}$, capturing the local evolution of the observable due to the continuous stochastic dynamics within each mode.
Accordingly, the evolution of $u(t,x,q)$ is governed by a diffusion equation in the interior of each mode, with coupling across modes arising only through discrete transitions.

The compatibility condition \eqref{eqn:koopman_bc} enforces consistency of the observable across discrete transitions.
Unlike classical boundary conditions, this condition directly reflects the reset mechanism of the hybrid system by equating the observable before and after the jump.
The condition \eqref{eqn:koopman_bc} remains well-defined even when $\Phi(x,q)$ maps between spaces of different dimensions.
This highlights that the Koopman framework naturally accommodates hybrid systems with dimension-varying transitions, since observables are defined on the entire hybrid state space $\H$.

\section{Hybrid Fokker--Planck Equation with Dimension Change}

In this section, we derive the adjoint of the stochastic Koopman generator, which governs the evolution of probability densities.
While the Koopman operator describes the evolution of observables, its adjoint characterizes how probability mass evolves under the hybrid stochastic dynamics.
The key feature is that probability transfer across guards is described via the pushforward of the reset map.

\subsection{Hybrid Fokker--Planck Equation}

\begin{theorem}\label{thm:fp}
	Consider the stochastic hybrid system defined in Section II.
	Then, for each mode $q\in\Q$, the probability density $p(t,x,q)$ satisfies
	\begin{align}
		\left(\frac{\partial p(t,x,q)}{\partial t} + \nabla\cdot Y_q(t,x)\right)dx = d\eta_q(t), \label{eqn:fp_measure_form}
	\end{align}
	on $\X_q$, where the probability current is
	\begin{align}
		Y_q(t,x) = p(t,x,q)X_q(x) - \frac{\sigma_q^2}{2}\nabla p(t,x,q),
	\end{align}
	and $\eta_q(t)$ is a measure on $\X_q$ given by
	\begin{align}
		\eta_q(t) = \sum_{r:\,\Phi(\G_r,r)\subset \X_q\times\{q\}} \Phi_*((Y_r\cdot N_r)\,dS_r), \label{eqn:eta_q}
	\end{align}
	where $N_r(x)$ denotes the outward unit normal vector to the guard $\G_r \subset \X_r$, and $dS_r$ is the corresponding surface measure on $\G_r$.
	Here the summation is taken over all modes $r\in\Q$ for which the reset map $\Phi$ sends points on the guard $\G_r$ into mode $q$, i.e., $\Phi(\G_r,r)\subset \X_q\times\{q\}$.
	Also, $\Phi_*$ denotes the pushforward of the measure by the reset map~\cite{bogachev2007measure}.
\end{theorem}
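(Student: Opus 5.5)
The plan is to derive \eqref{eqn:fp_measure_form} as the adjoint of the Koopman generator in the previous theorem, working in weak form so that the singular transfer terms appear naturally. Fix a smooth test observable $u:\H\to\Re$ that satisfies the compatibility condition \eqref{eqn:koopman_bc}, namely $u(x,r)=u(\Phi(x,r))$ for $x\in\G_r$. On $\Gamma_r$ we only require the natural reflecting-type condition $\nabla u\cdot N_r=0$. By duality between densities and observables,
\begin{align}
\frac{d}{dt}\sum_{q}\int_{\X_q} p\,u\,dx=\sum_q\int_{\X_q} p\,\mathcal{A}_q u\,dx .
\end{align}
Integrating by parts gives $\int p\,\mathcal{A}_q u\,dx = -\int (\nabla\cdot Y_q)\,u\,dx + \int_{\partial\X_q}\bigl(u\,Y_q\cdot N_q - \tfrac{\sigma_q^2}{2}p\,\nabla u\cdot N_q\bigr)dS$. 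This uses the identity $\int p X\cdot\nabla u + \tfrac{\sigma^2}{2}p\Delta u = -\int u\nabla\cdot Y + $ boundary terms, which is two applications of the divergence theorem.

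Next, we evaluate the boundary terms piece by piece.
\begin{itemize}
\item On $\Gamma_q$, the term $Y_q\cdot N_q$ vanishes by the reflecting assumption, and $\nabla u\cdot N_q=0$ as well, so both contributions drop out.
\item On $\G_q$, the trace $p=0$ kills the term $p\,\nabla u\cdot N_q$. Only $\int_{\G_q} u\,(Y_q\cdot N_q)\,dS_q$ survives.
\end{itemize}
This remaining term is the outflow of probability through the guard, so it enters the balance with the sign that removes mass from mode $q$. We use the compatibility condition to rewrite it as $\int_{\G_q} u(\Phi(x,q))\,(Y_q\cdot N_q)\,dS_q$. By the change-of-variables formula for pushforward measures, this equals $\int_{\H} u\, d\Phi_*((Y_q\cdot N_q)dS_q)$. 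Grouping by target mode, this is $\sum_{q}\int_{\X_q}u\,d\eta_q$, with $\eta_q$ as in \eqref{eqn:eta_q}. Here we assume each guard $\G_r$ maps into a single mode, as the summation in \eqref{eqn:eta_q} presumes; otherwise $\G_r$ is partitioned by target mode first.

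Collecting terms, we obtain
\begin{align}
\sum_q\int_{\X_q} u\Bigl(\partial_t p+\nabla\cdot Y_q\Bigr)dx=\sum_q\int_{\X_q}u\,d\eta_q
\end{align}
for all admissible $u$. To conclude mode by mode, we choose $u$ supported in a single mode $q$ and in the interior, away from guards, so that compatibility is trivially satisfied. To reach test functions that are nonzero near points in $\Phi(\G_r,r)$, we extend to $\X_r$ by $u\circ\Phi$ near $\G_r$, which is allowed because only the guard values of $u$ in mode $r$ enter. This identifies the measures $(\partial_t p+\nabla\cdot Y_q)dx$ and $\eta_q$ on $\X_q$, giving \eqref{eqn:fp_measure_form}.

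The main obstacle is the sign and orientation bookkeeping for the flux term, together with justifying the test-function class. We need the outward flux $Y_r\cdot N_r\ge0$ leaving $\G_r$ to appear as a positive source in mode $q$, consistent with conservation of total mass: pairing with $u\equiv1$ must yield $\frac{d}{dt}\sum_q\int p=0$. We also need the class of compatible test functions to be rich enough to separate measures, including singular $\eta_q$ when $\Phi$ raises dimension. I would handle the sign by checking the $u\equiv1$ case first. For density, I would argue that compatible test functions restricted to $\X_q$ include all $C_c^\infty(\X_q\setminus\G_q)$ functions, adjusted only on neighborhoods of the guards of other modes.
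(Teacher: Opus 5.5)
Your proposal is correct and is essentially the paper's proof: the same duality with test functions obeying the reset-compatibility and Neumann conditions, the same two integrations by parts, the same elimination of the $\Gamma_q$ terms and of the $p\,\nabla u\cdot N_q$ guard term via the zero trace, and the same pushforward change of variables to produce $\eta_q$; your closing localization argument is extra care where the paper simply invokes the arbitrariness of $f$. There are two harmless slips: the diffusion boundary term should be $+\tfrac{\sigma_q^2}{2}\,p\,\nabla u\cdot N_q$ (it vanishes anyway), and the surviving guard term is the re-injection into the target mode rather than the removal from mode $q$ (the removal sits in $-\int u\,\nabla\cdot Y_q$), though your final signs come out right.
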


\begin{proof}
	By the duality of the stochastic Koopman operator and the Frobenius--Perron operator, we have
	\begin{align}
		\pair{\frac{\partial p}{\partial t}, f} = \pair{p, \mathcal{A}f}, \label{eqn:tmp00}
	\end{align}
	for every test function $f:\H\to\Re$ in the domain of the Koopman generator.
	In particular, $f$ is smooth in the interior of each mode and satisfies the compatibility conditions on the boundary
	\begin{align}
		f(x,q)=f(\Phi(x,q)), \qquad x\in\G_q, \label{eqn:koopman_bc_f}\\
		\nabla f(x,q)\cdot N_q(x)=0, \qquad x\in\Gamma_q, \label{eqn:reflecting_bc_f}
	\end{align}
	where the first condition enforces consistency across the reset map and the second corresponds to reflecting boundary behavior.

	Using \eqref{eqn:koopman_generator}, we have
	\begin{align}
		\pair{p,\mathcal{A} f} = \sum_{q\in\Q}\int_{\X_q} \Bigl(X_q\cdot \nabla f + \frac{\sigma_q^2}{2}\Delta f\Bigr)p\,dx.
		\label{eqn:tmp0}
	\end{align}
	Applying integration by parts, via the divergence theorem, to the drift term yields
	\begin{align*}
		\int_{\X_q} pX_q\cdot \nabla f\,dx = \int_{\partial\X_q} fpX_q\cdot N_q\,dS_q - \int_{\X_q} f\nabla\cdot(pX_q)\,dx,
	\end{align*}
	and for the diffusion term,
	\begin{align*}
		\int_{\X_q} p\Delta fdx & = \int_{\partial\X_q} p\nabla f\cdot N_qdS_q - \int_{\partial\X_q} f\nabla p\cdot N_qdS_q\\
		& \quad + \int_{\X_q} f\,\Delta p\,dx,
	\end{align*}
	where $dS_q$ denotes the surface measure on $\partial \X_q$.
	Substituting these into \eqref{eqn:tmp0}, we obtain
	\begin{align}
		\pair{p,\mathcal{A} f} & = \sum_{q\in\Q}\int_{\X_q} -f\, \nabla\cdot Y_q\, dx + \sum_{q\in\Q}\int_{\partial\X_q} f\,(Y_q\cdot N_q)\,dS_q\nonumber\\
		& \quad + \sum_{q\in\Q}\int_{\partial\X_q} \frac{\sigma_q^2}{2}\,p\,\nabla f\cdot N_q\,dS_q.
		\label{eqn:tmp1}
	\end{align}

	Next, we simplify the last two terms, which are boundary integrals over $\partial \X_q$.
	Recall that the boundary is decomposed as $\partial \X_q=\G_q\cup\Gamma_q$.
	On the reflecting boundary $\Gamma_q$, the probability current satisfies $Y_q\cdot N_q=0$, and the test function satisfies $\nabla f\cdot N_q=0$.
	Thus, both boundary integrals in \eqref{eqn:tmp1} vanish on $\Gamma_q$, and only the guard contributions remain.
	In other words,
	\begin{align*}
		&\sum_{q\in\Q}\int_{\partial\X_q} f\,(Y_q\cdot N_q)\,dS_q + \sum_{q\in\Q}\int_{\partial\X_q} \frac{\sigma_q^2}{2}\,p\,\nabla f\cdot N_q\,dS_q\\
		&= \sum_{q\in\Q}\int_{\G_q} f\,(Y_q\cdot N_q)\,dS_q + \sum_{q\in\Q}\int_{\G_q} \frac{\sigma_q^2}{2}\,p\,\nabla f\cdot N_q\,dS_q.
	\end{align*}

	On the guard $\G_q$, the process is instantaneously reset and does not remain on the guard, so the interior density has zero trace on $\G_q$. Hence
	\begin{align*}
		p(t,x,q)\,\nabla f(x,q)\cdot N_q = 0, \qquad x\in\G_q.
	\end{align*}
	Therefore, the second boundary integral vanishes entirely.

	The remaining boundary contribution is
	\begin{align*}
		\sum_{q\in\Q}\int_{\G_q} f(x,q)\,(Y_q\cdot N_q)\,dS_q.
	\end{align*}
	Let $\G_r$ be a guard in mode $r$, and suppose that its reset image lies in mode $q$, i.e., $\Phi(\G_r,r)\subset \X_q\times\{q\}$.
	Using \eqref{eqn:koopman_bc_f},
	\begin{align*}
		\int_{\G_r} f(x,r)\,(Y_r\cdot N_r)\,dS_r = \int_{\G_r} f(\Phi(x,r))\,(Y_r\cdot N_r)\,dS_r.
	\end{align*}
	By the definition of the pushforward measure, this can be written as
	\begin{align*}
		&\int_{\G_r} f(\Phi(x,r))\,(Y_r\cdot N_r)\,dS_r\\
		& = \int_{\X_q} f(y,q)\,d\!\left[\Phi_*\bigl((Y_r\cdot N_r)\,dS_r\bigr)\right](y),
	\end{align*}
	which is the standard integral transformation formula for image measures \cite[Theorem 3.6.1]{bogachev2007measure}.
	From the definition \eqref{eqn:eta_q}, the total guard contribution is rearranged into
	\begin{align*}
		\sum_{r\in\Q}\int_{\G_r} f(x,r)\,(Y_r\cdot N_r)\,dS_r = \sum_{q\in\Q}\int_{\X_q} f(x,q)\,d\eta_q(t).
	\end{align*}
	Substituting this into \eqref{eqn:tmp1} and combining it with \eqref{eqn:tmp00}, we obtain
	\begin{align*}
		\pair{\frac{\partial p}{\partial t}, f} = \sum_{q\in\Q}\int_{\X_q} -f\, \nabla\cdot Y_q\, dx + \sum_{q\in\Q}\int_{\X_q} f\,d\eta_q(t).
	\end{align*}
	Since this holds for arbitrary test functions $f$, it follows that
	\begin{align}
		\left(\frac{\partial p(t,x,q)}{\partial t} + \nabla\cdot Y_q(t,x)\right)dx = d\eta_q(t)
	\end{align}
	as measures on $\X_q$, which shows \eqref{eqn:fp_measure_form}.
\end{proof}

The evolution equation \eqref{eqn:fp_measure_form} expresses conservation of probability within each mode, where the divergence term $\nabla\cdot Y_q$ accounts for the continuous stochastic flow, and the measure-valued term $\eta_q(t)$ represents probability injected into mode $q$ through reset events.
Specifically, $\eta_q(t)$ is the pushforward of the outgoing probability flux across guards in other modes, mapped into $\X_q$ by the reset map $\Phi$.
Thus, the hybrid Fokker--Planck equation consists of a classical diffusion--advection equation in the interior, coupled with a nonlocal source term induced by boundary-triggered jumps.

\subsection{Special Cases}
Several special cases provide additional insight.

\emph{(i) No jumps.}
If there are no guards, i.e., $\G_q=\emptyset$ for all $q$, then $\eta_q(t)=0$, and \eqref{eqn:fp_measure_form} reduces to the standard Fokker--Planck equation
\begin{align*}
	\frac{\partial p}{\partial t} + \nabla\cdot Y_q = 0,
\end{align*}
on each mode independently.

\emph{(ii) Purely deterministic flow.}
If $\sigma_q=0$, then $Y_q=pX_q$, and the equation reduces to a hybrid Liouville equation with measure-valued source:
\begin{align*}
	\left(\frac{\partial p}{\partial t} + \nabla\cdot(pX_q)\right)dx = d\eta_q(t),
\end{align*}
where probability is transported deterministically in the interior and redistributed through reset events.

\emph{(iii) No reset across modes.}
If $\Phi(\G_q,q)\subset \X_q\times\{q\}$ for all $q$, then the system does not change mode, and $\eta_q(t)$ represents redistribution within the same domain $\X_q$.
In this case, the equation describes a diffusion process with nonlocal boundary-induced source within a single mode.

\emph{(iv) Strong form.}
If $\eta_q(t)$ admits a density with respect to the Lebesgue measure, i.e., $d\eta_q(t)=s_q(t,x)\,dx$, then \eqref{eqn:fp_measure_form} can be written in strong form as
\begin{align*}
	\frac{\partial p}{\partial t} + \nabla\cdot Y_q = s_q(t,x),
\end{align*}
where $s_q(t,x)$ captures the redistributed probability mass from reset events.
In general, however, $\eta_q(t)$ is singular and supported on lower-dimensional subsets induced by the image of the guards.

\emph{(v) Flux balance across guards.}
The measure $\eta_q(t)$ ensures global conservation of probability across modes: the total outgoing flux through all guards equals the total incoming mass distributed by the pushforward measures.
This provides a consistent coupling between modes even when their dimensions differ.

\section{Example: Coalescing and Splitting Particles}

We present a stochastic hybrid system in which two particles evolve on a one-dimensional line segment and collapse into a single particle when they become sufficiently close, but may subsequently split back into two particles.
This example illustrates a hybrid system in which the dimension of the continuous state space changes across modes.

There are two modes $\Q=\{1,2\}$.
The first mode corresponds to the evolution of two particles, and the second mode describes the dynamics of the merged particle. 
The hybrid dynamics in each mode are described as follows.

\subsection{Mode 1: Two particles on $[0,1]^2$}

Let $x_A,x_B \in [0,1]$ denote the positions of two particles.
For $\epsilon\in(0,1)$, define the continuous state space
\begin{align}\label{eqn:domain_M1}
	\X_1 = \{(x_A,x_B)\in[0,1]^2 \mid |x_A - x_B| > \epsilon \}.
\end{align}
This corresponds to the unit square with a diagonal band of width $\epsilon$ removed, where the two particles are sufficiently close, as illustrated in \Cref{fig:guard1}.

In the interior of $\X_1$, the state evolves according to
\begin{align}\label{eqn:dx1}
	d x_1 = X_1(x_1)\,dt + \sigma_1 dW_1,
\end{align}
where $x_1=(x_A,x_B)$, and $X_1:\X_1\to\mathbb{R}^2$ is a vector field. 
Also, $\sigma_1>0$, and $W_1(t)\in\mathbb{R}^2$ is a Wiener process.

The outer boundary is reflecting, meaning that trajectories are reflected back into the interior upon reaching it. 
Specifically,
\begin{align}
	\Gamma_1 = \{(x_A,x_B)\in[0,1]^2 \mid x_A\in\{0,1\} \text{ or } x_B\in\{0,1\}\}.
\end{align}

The guard is the one-dimensional set where the distance between the two particles equals $\epsilon$:
\begin{align}\label{eqn:G1}
	\G_1 = \{(x_A,x_B)\in[0,1]^2 \mid |x_A - x_B| = \epsilon\}.
\end{align}
Thus, the boundary decomposes as $\partial\X_1 = \Gamma_1 \cup \G_1$.

When the guard is reached, the two particles collapse into a single particle located at their midpoint, and the system switches to Mode 2.
The reset map is given by
\begin{align}\label{eqn:Phi_1to2}
	\Phi((x_A,x_B),1) = \left(\frac{x_A + x_B}{2},\,2\right).
\end{align}

\subsection{Mode 2: Single particle on $[0,1]$}

Let $x_C\in[0,1]$ denote the position of the merged particle.
The continuous state space is
\begin{align}
	\X_2 = [0,1].
\end{align}

The dynamics are given by
\begin{align}\label{eqn:dx2}
	d x_2 = X_2(x_2)\,dt + \sigma_2 dW_2,
\end{align}
where $X_2:\X_2\to\mathbb{R}$ is a vector field, $W_2(t)\in\mathbb{R}$ is a Wiener process, and $\sigma_2>0$.

The boundary at $x=0$ is reflecting, and the boundary at $x=1$ is a guard. 
Thus,
\begin{align}
	\Gamma_2 = \{0\}, \quad \G_2 = \{1\}.
\end{align}
At the guard, the mass is split into two parts at a fixed location $x_1^* = (x^*_A,x^*_B)\in\X_1$ and the system switches to Mode~1. 
The reset map is
\begin{align}\label{eqn:Phi_2to1}
	\Phi(x_C=1, 2) = ((x^*_A, x^*_B), 1).
\end{align}

This system exhibits a change in dimension between $\dim(\X_1)=2$ to $\dim(\X_2)=1$ via the reset maps.
Since the reset image satisfies $\Phi(\G_1)\subset \mathrm{int}(\X_2)$,
probability flux leaving Mode 1 through $\G_1$ is injected into the interior of $\X_2$.
Consequently, the Fokker--Planck equation for Mode 2 contains a measure-valued source term induced by the pushforward of the guard flux.
Also, the Fokker--Planck equation for Mode 1 includes a source term given by a Dirac measure.

\begin{figure}
	\begin{center}
		\footnotesize\selectfont
		\begin{tikzpicture}[scale=5,>=Latex]
			\def\eps{0.18}

	% axes
			\draw[->] (-0.02,0) -- (1.10,0) node[below] {$x_A$};
			\draw[->] (0,-0.02) -- (0,1.10) node[left] {$x_B$};
			\node[below] at (1,0) {$1$};
			\node[left] at (0,1) {$1$};
			\node[below left] at (0,0) {$0$};

	% outer square
			\draw[thick] (0,0) rectangle (1,1);

	% --- Shade X_1 (outside the diagonal band) ---
	% Upper region: x_B >= x_A + eps
			\fill[gray!12]
				(0,\eps) -- (0,1) -- (1,1) -- (1-\eps,1) -- cycle;

	% Lower region: x_B <= x_A - eps
			\fill[gray!12]
				(0,0) -- (\eps,0) -- (1,1-\eps) -- (1,0) -- cycle;

	% Guard lines
			\draw[very thick] (\eps,0) -- (1,1-\eps)
				node[pos=0.6, below right, black, rotate=45] {$\G_1^+$};
			\draw[very thick] (0,\eps) -- (1-\eps,1)
				node[pos=0.4, above left, black, rotate=45] {$\G_1^-$};

	% Middle excluded band (visual hint)
			\draw[dashed] (0,0) -- (1,1);
			\node at (0.72,0.70) {\shortstack[c]{$|x_A-x_B|$\\$<\epsilon$}};

	% Label X_1
			\node at (0.2,0.85) {$\X_1$};

		\end{tikzpicture}
	\end{center}
	\caption{Illustration of the continuous state space $\X_1$ and the guard $\G_1$ for the first mode}
	\label{fig:guard1}
\end{figure}
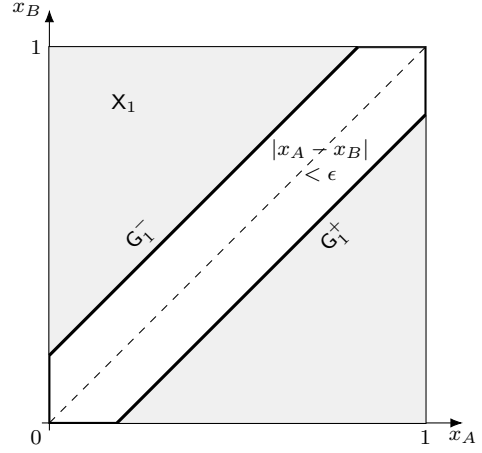

\subsection{Derivation of Source Measure $\eta_1$}

The source measure $\eta_1$ is generated by the outgoing probability flux from Mode~2 through the guard $\G_2=\{1\}$ and its reset into Mode~1.
By definition,
\begin{align}
	\eta_1(t)=\Phi_*\bigl((Y_2\cdot N_2)\,dS_2\bigr).
\end{align}
Since $\X_2=[0,1]$, the outward unit normal at the guard point $x_C=1$ is simply $N_2(1)=1$.
Moreover, because $\G_2=\{1\}$ is a zero-dimensional manifold, the induced surface measure is the counting measure on that point.
Hence
\begin{align}
	(Y_2\cdot N_2)\,dS_2 = Y_2(t,1)\,\delta_1(x_C),
\end{align}
where the probability current in Mode~2 is
\begin{align}
	Y_2(t,x_C)=X_2(x_C)\,p(t,x_C,2)-\frac{\sigma_2^2}{2}\frac{\partial p(t,x_C,2)}{\partial x_C}.
\end{align}

Since the reset map at the guard is constant,
\begin{align}
	\Phi(1,2)=((x^*_A,x^*_B),1),
\end{align}
the entire outgoing flux from $\G_2$ is mapped to the single point $(x^*_A,x^*_B)\in\X_1$.
Therefore, the pushforward measure is a Dirac mass supported at the reset location:
\begin{align}
	d\eta_1(t)=Y_2(t,1)\,\delta_{(x^*_A,x^*_B)}(x_A,x_B),
\end{align}
where $Y_2(t,1)$ denotes the scalar outward flux at the boundary, i.e., $(Y_2(t,1)\cdot N_2(1))$.

Thus, the source measure $\eta_1$ is singular with respect to the two-dimensional Lebesgue measure on $\X_1$.
This is a consequence of the fact that the reset map sends the zero-dimensional guard $\G_2$ to a single point in the two-dimensional state space $\X_1$.
Hence the reset from Mode~2 to Mode~1 appears in the Fokker--Planck equation for Mode~1 as a point source located at $(x^*_A,x^*_B)$.

\subsection{Derivation of Source Measure $\eta_2$}

The source measure $\eta_2$ is given by
\begin{align}
	\eta_2(t)=\Phi_*\bigl((Y_1\cdot N_1)\,dS_1\bigr).
\end{align}
The guard $\G_1$ consists of two branches.
Specifically, we have $\G_1 = \G_1^+\cup \G_1^-$ with
\begin{align*}
	\G_1^+ & = \{(x_A,x_B)\in[0,1]^2 \mid x_A-x_B=\epsilon\}, \\
	\G_1^- & = \{(x_A,x_B)\in[0,1]^2 \mid x_A-x_B=-\epsilon\}.
\end{align*}
Each branch of $\G_1$ is a straight line with slope $1$. 
Hence the outward unit normal vectors are obtained directly from the geometry as
\begin{align}
	N_1^+ = \frac{1}{\sqrt2}\begin{bmatrix}-1\\ 1\end{bmatrix} \quad \text{on } \G_1^+, 
\end{align}
and $N_1^- = - N_1^+$ on $\G_1^-$.

Next, since $\eta_2$ is defined as the pushforward of a surface measure supported on $\G_1$, we parameterize $\G_1$.
Specifically, we choose the midpoint as the parameter:
\begin{align}
	x_C = \frac{x_A+x_B}{2}.
\end{align}
Then any point $(x_A,x_B)\in\G_1$ on the guard is written as
\begin{align}
	(x_A,x_B)=\begin{cases}
		\left(x_C+\frac{\epsilon}{2},\,x_C-\frac{\epsilon}{2}\right)&  \text{ on $\G_1^+$},\\
		\left(x_C-\frac{\epsilon}{2},\,x_C+\frac{\epsilon}{2}\right)&  \text{ on $\G_1^-$}.
	\end{cases}
\end{align}
Since $x_A,x_B\in[0,1]$ and $x_A = x_C \pm \frac{\epsilon}{2}$, $x_B = x_C \mp \frac{\epsilon}{2}$ on the guard, it follows that
\begin{equation}
	\frac{\epsilon}{2} \le x_C \le 1 - \frac{\epsilon}{2},   \label{eqn:xC_range}
\end{equation}
which defines the admissible range of the midpoint coordinate.
Since each branch is a line of slope \(1\), the surface measure on the line is
\begin{align}
	dS_1=\sqrt2\,dx_C.
\end{align}

Next, the probability current at Mode 1 is
\begin{align}
	Y_1(t,x_A,x_B) &= X_1(x_A,x_B)\,p(t,x_A,x_B,1) \nonumber \\
				   & \quad -\frac{\sigma_1^2}{2}\nabla p(t,x_A,x_B,1)\nonumber \\
				   &=
				   \begin{bmatrix}
					   Y_A(t,x_A,x_B)\\
					   Y_B(t,x_A,x_B)
				   \end{bmatrix},\label{eqn:mass_Y1}
\end{align}
where
\begin{align}
	Y_A &= X_{1_A}\,p_1 -\frac{\sigma_1^2}{2}\frac{\partial p_1}{\partial x_A},\\
	Y_B&= X_{1_B}\,p_1 -\frac{\sigma_1^2}{2}\frac{\partial p_1}{\partial x_B}.
\end{align}
with $X_1 = (X_{1_A}, X_{1_B})\in\Re^2$.

Then on $\G_1^+$, we have
\begin{align}
	(Y_1\cdot N_1^+)\,dS_1 &= \frac{1}{\sqrt2}(-Y_A+Y_B)\,\sqrt2\,dx_C \nonumber \\
						   & = (-Y_A+Y_B)\,dx_C,
\end{align}
and similarly on $\G_1^-$,
\begin{align}
	(Y_1\cdot N_1^-)\,dS_1 = (Y_A-Y_B)\,dx_C.
\end{align}

The reset map is written as
\begin{align}
	\Phi((x_A,x_B),1)=\left(\frac{x_A+x_B}{2},\,2\right)=(x_C,2).
\end{align}
Since $\Phi$ maps each branch of the guard diffeomorphically onto an interval in $\X_2$, the pushforward measure $\eta_2$ is absolutely continuous with respect to the Lebesgue measure.
Thus, the source measure can be written as
\begin{align}
	d\eta_2(t)=s_2(t,x_C)\,dx_C,
\end{align}
where the source density is obtained by summing the contributions from the two branches:
\begin{align}
	s_2(t,x_C) &= -Y_A\left(t,x_C+\frac{\epsilon}{2},x_C-\frac{\epsilon}{2}\right) \nonumber\\
			   & \quad + Y_B\left(t,x_C+\frac{\epsilon}{2},x_C-\frac{\epsilon}{2}\right) \nonumber\\
			   &\quad+ Y_A\left(t,x_C-\frac{\epsilon}{2},x_C+\frac{\epsilon}{2}\right) \nonumber\\ 
			   &\quad - Y_B\left(t,x_C-\frac{\epsilon}{2},x_C+\frac{\epsilon}{2}\right),
			   \label{eqn:s2_line}
\end{align}
for $x_C$ satisfying \eqref{eqn:xC_range}.

This expression shows that the source term is determined by the imbalance of probability flux between the two particles at the moment of collapse.
% In this example, the reset map \(\Phi:\G_1\to\X_2\) is a local diffeomorphism between manifolds of equal dimension, i.e., $\mathrm{dim}(\G_1)=\mathrm{dim}(\X_2)$. 
% As a result, the pushforward of the boundary flux reduces to a pointwise evaluation with a Jacobian factor, and no integration over the guard is required.

\subsection{Fokker--Planck Equation}
Consequently, the Fokker--Planck equation is
\begin{align}
	\frac{\partial p(t,x_A,x_B,1)}{\partial t} & = - \frac{\partial}{\partial x_A} Y_A(t,x_A,x_B)\nonumber\\
											   & \quad - \frac{\partial}{\partial x_B} Y_B(t,x_A,x_B)\nonumber\\
											   & \quad + Y_2(t,1)\,\delta_{(x^*_A,x^*_B)}(x_A,x_B), \label{eqn:FP_mass_1}\\
	\frac{\partial p(t,x_C,2)}{\partial t} &= -\frac{\partial}{\partial x_C}Y_2(t,x_C) + s_2(t,x_C),\label{eqn:FP_mass_2}
\end{align}
with the boundary conditions given by
\begin{gather}
	p_1(t,x_A,x_B) = 0 \text{ at $\G_1$},\\
	Y_1(t,x_A,x_B) \cdot N_1 = 0 \text{ at $\Gamma_1$},\\
	p_2(t,x_C) = 0 \text{ at $\G_2$},\\
	Y_2(t, x_C) = 0 \text{ at $\Gamma_2$}.
\end{gather}
Hence Mode~1 follows the Fokker--Planck equation on \(\X_1\) with reflecting outer boundary and vanishing trace on the guard, together with the reset-induced Dirac source generated by outgoing probability flux from Mode~2.
Similarly, Mode~2 follows the Fokker--Planck equation on \(\X_2\) with reflecting boundary at \(x_C=0\), vanishing trace at the guard \(x_C=1\), and the source term \(s_2\) induced by outgoing probability flux from Mode~1.

This example highlights two fundamentally different types of reset-induced source terms.
When probability flows from Mode 1 to Mode 2, the pushforward of boundary flux produces an absolutely continuous source density in the lower-dimensional state space.
In contrast, when probability flows from Mode 2 back to Mode 1, the pushforward of flux from a lower-dimensional guard results in a singular source concentrated on a curve in the higher-dimensional space.

This demonstrates that dimension-changing resets naturally generate both smooth and singular contributions, which cannot be captured within classical boundary-condition-based formulations.
More broadly, it illustrates how the theorem accommodates dimension-changing resets by converting boundary flux in one mode into an interior source term in another.

\subsection{Numerical Example}

% Requires:
% \usepackage{tikz}
% \usetikzlibrary{arrows.meta,calc}

\begin{figure}
	\begin{center}
		\footnotesize\selectfont
		\begin{tikzpicture}[scale=4, >=Latex]
			\def\eps{0.18}
			\def\alphaone{2.5}
			\def\gammaone{0.01}
			\def\vecscale{0.1}   % scale factor for arrows

			% axes
			\draw[->] (-0.03,0) -- (1.1,0) node[below] {$x_A$};
			\draw[->] (0,-0.03) -- (0,1.1) node[left] {$x_B$};

			% outer box
			\draw[thick] (0,0) rectangle (1,1);

			% shade X_1
			\fill[gray!12] (0,\eps) -- (0,1) -- (1,1) -- (1-\eps,1) -- cycle;
			\fill[gray!12] (0,0) -- (\eps,0) -- (1,1-\eps) -- (1,0) -- cycle;

			% guard lines
			\draw[very thick] (\eps,0) -- (1,1-\eps);
			\draw[very thick] (0,\eps) -- (1-\eps,1);

			% optional dashed diagonal
			\draw[dashed] (0,0) -- (1,1);

			% vector field samples
			% choose grid points manually
			\foreach \xa in {0.08,0.18,0.28,0.38,0.48,0.58,0.68,0.78,0.88}{
				\foreach \xb in {0.08,0.18,0.28,0.38,0.48,0.58,0.68,0.78,0.88}{

					% only draw on X_1, i.e. |x_A-x_B| > eps
					\pgfmathsetmacro{\dist}{abs(\xa-\xb)}

					\ifdim \dist pt>\eps pt
						% vector field
						\pgfmathsetmacro{\u}{\xa*(1-\xa)*(\alphaone*(\xb-0.5)-\gammaone*(\xa-0.5))}
						\pgfmathsetmacro{\v}{\xb*(1-\xb)*(-\alphaone*(\xa-0.5)-\gammaone*(\xb-0.5))}

						% normalize slightly for visualization
						\pgfmathsetmacro{\normuv}{veclen(\u,\v)}
						\ifdim \normuv pt>0pt
							\pgfmathsetmacro{\du}{\vecscale*\u/(\normuv+0.00001)}
							\pgfmathsetmacro{\dv}{\vecscale*\v/(\normuv+0.00001)}

							\draw[->,blue!70!black] (\xa,\xb) -- ++(\du,\dv);
						\fi
					\fi
				}
			}
		\end{tikzpicture}
	\end{center}
	\vspace*{-0.5cm}
	\caption{Vector field on $\X_1$}\label{fig:X1}
\end{figure}
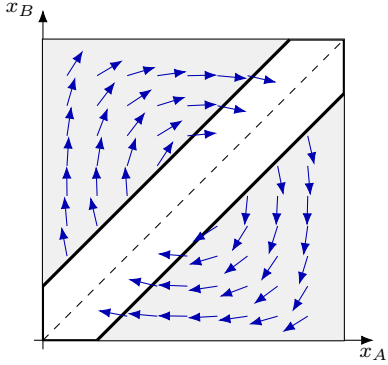

For $\alpha_1, \gamma_1>0$, the vector field for Mode~1 is chosen as
\begin{align}\label{eqn:ex_X1}
	X_1 = \begin{bmatrix} x_A(1-x_A) (\alpha_1(x_B-0.5)-\gamma_1(x_A-0.5)) \\
		x_B(1-x_B) (-\alpha_1(x_A-0.5)-\gamma_1(x_B-0.5)) \\
	\end{bmatrix},
\end{align}
where the first factor ensures that the vector field is parallel with the outer boundary at $\Gamma_1$, and the second factor creates clockwise rotation about the center $(0.5,0.5)$, which is illustrated in \Cref{fig:X1}.
The last factor yields the convergence to the center.
For Mode~2,
\begin{align}\label{eqn:ex_X2}
	X_2 = -\gamma_2 (x_C-2),
\end{align}
which corresponds to the shift to the right in $\X_2=[0,1]$ for $\gamma_2>0$.
The drift parameters are chosen as $\alpha_1 = 2.5, \gamma_1 = 0.2, \gamma_2 = 0.2$.
Next, the diffusion coefficients are $ \sigma_1 = 0.01, \sigma_2 = 0.01 $.

Regarding the jump characteristics, the band width of the guard $\G_1$ is chosen as $\epsilon = 0.05$, while the flux from $\G_2$ is mapped to the source $(x^*_A,x^*_B) = (0.5, 0.8) \in \X_1$.
In the numerical scheme, the reset from Mode 2 to 1 is implemented as a localized source using a smoothed approximation of the Dirac delta distribution to ensure numerical stability while preserving total mass.

The initial density is supported entirely in Mode 1 and is given by the Gaussian distribution $p(0, x_A, x_B, 1) = \mathcal{N}(\mu_{AB}, \sigma_{AB}^2) $ where $\mu_{AB} = [\mu_A, \mu_B] = [0.6, 0.2] $ and $\sigma_{AB}^2 = \text{diag}(0.008, 0.004)$, restricted to the admissible region ${|x_A - x_B| > \epsilon}$.
The density is normalized so that the total probability mass is unity.
In Mode 2, the initial density is $p(0, x_C, 2) = 0$.

The hybrid Fokker–Planck equations are discretized using a finite volume method on structured grids with resolutions $N_{x_A} = 201$, $N_{x_B} = 201$, $N_{x_C} = 201$.
A second-order MUSCL (Monotonic Upstream-centered Scheme for Conservation Laws)-type reconstruction with a minmod slope limiter is used for the advective fluxes~\cite{leveque2002finite}, while diffusion is discretized using centered differences. 
Time integration is performed using a strong stability preserving second-order Runge–Kutta (SSP-RK2) scheme with $N_t = 20000$ time steps up to a final time $T = 5$.
The scheme directly discretizes the probability current rather than the density alone, ensuring that inter-mode transfer is governed by flux consistency rather than ad hoc source approximations.

The corresponding evolution of the densities is illustrated in \Cref{fig:density_snap}.
Starting from the initial Gaussian distribution in Mode 1, the rotational drift field in \eqref{eqn:ex_X1} drives the density $p_1$ toward one branch of the guard $\G_1^+$ (\Cref{fig:density_snap}.(a)--(b)). 
As the trajectories approach the region where $|x_A - x_B| \le \epsilon$, probability mass transitions to Mode 2 through the reset map \eqref{eqn:Phi_1to2} (\Cref{fig:density_snap}.(b)). 
In Mode 2, the drift \eqref{eqn:ex_X2} transports the density $p_2$ toward the boundary $x_C = 1$ on the right, where it is reset to the point $x^*=(x_A^*,x_B^*)$ in Mode 1 (\Cref{fig:density_snap}.(c)--(d)). 
The subsequent evolution in Mode 1 causes the density $p_1$ to approach the opposite branch $\G_1^-$, leading to repeated transitions between the two modes (\Cref{fig:density_snap}.(e)). 
This interplay between continuous evolution and discrete transitions results in a cyclic hybrid behavior. 

%Special care is taken in the treatment of the hybrid interface. 
%Fluxes across the guard $\G_1$ are consistently transferred to Mode 2 through a conservative projection onto the $x_C$ grid. 
%Likewise, the reset from Mode 2 to Mode 1 is implemented via a normalized kernel to ensure that the total mass injected into Mode 1 matches the outgoing flux from Mode 2.

To verify the numerical results, the evolution of the probability mass in each mode, as well as the total mass, is computed.
The results in \Cref{fig:total_mass} confirm that the total mass is conserved throughout the simulation, demonstrating the conservative nature of the proposed scheme.
In addition, a Monte Carlo simulation is performed for further validation.
Specifically, $2\times 10^5$ particles are initialized according to the same truncated Gaussian distribution in Mode~1 and propagated using the Euler–Maruyama scheme.
The density approximated using the Monte Carlo ensemble in \Cref{fig:density_mc_snap} follows the hybrid Fokker--Planck results, with the only difference being the smoothed approximation of the Dirac distribution in the source term of \eqref{eqn:FP_mass_1}.
Furthermore, as shown in \Cref{fig:total_mass}, the mass evolution predicted by the hybrid Fokker–Planck equation closely matches that obtained from the Monte Carlo simulation.

%Rather than comparing instantaneous jump rates, which are sensitive to time discretization, we compare the cumulative flux of particles transitioning from Mode 2 to Mode 1 over time. 
%The cumulative flux obtained from the Monte Carlo simulation closely matches that computed from the numerical solution of the Fokker–Planck equations, providing strong evidence of consistency between the two approaches.

\begin{figure}
	\centering
	\foreach \i / \t in {00/0}{
		\subcaptionbox{$t =\t $}
		{\scriptsize\begin{tikzpicture}
			\node {\includegraphics[width=0.975\columnwidth]{./Figs/combined_0\i000.pdf}};		
			\node[white, left] at (-1.8,1.0) {$x^*$};
		\end{tikzpicture}\vspace*{-0.45cm} }
	}
	\foreach \i / \t in {06/1.5, 12/3, 16/4, 20/5}{
		\subcaptionbox{$t =\t $}
		{\scriptsize\begin{tikzpicture}
			\node {\includegraphics[width=0.975\columnwidth]{./Figs/combined_0\i000.pdf}};		
			\node[white, left] at (-1.8,1.3) {$x^*$};
		\end{tikzpicture}\vspace*{-0.45cm} }
	}
	\caption{Probability density evolution for Mode 1 (left) and Mode 2 (right)}
	\label{fig:density_snap}
\end{figure}

\begin{figure}
	\centering
	\foreach \i / \t in {00/0}{
		\subcaptionbox{$t =\t $}
		{\scriptsize\begin{tikzpicture}
				\node {\includegraphics[width=0.975\columnwidth]{./Figs/mc_0\i000.pdf}};		
				\node[white, left] at (-1.8,1.0) {$x^*$};
			\end{tikzpicture}\vspace*{-0.45cm} }
	}
	\foreach \i / \t in {06/1.5, 12/3, 16/4, 20/5}{
		\subcaptionbox{$t =\t $}
		{\scriptsize\begin{tikzpicture}
				\node {\includegraphics[width=0.975\columnwidth]{./Figs/mc_0\i000.pdf}};		
				\node[white, left] at (-1.8,1.3) {$x^*$};
			\end{tikzpicture}\vspace*{-0.45cm} }
	}
	\caption{Monte Carlo density evolution for Mode 1 (left) and Mode 2 (right); compare with \Cref{fig:density_snap}}
	\label{fig:density_mc_snap}
\end{figure}

\begin{figure}[h]
	\centering
	\includegraphics[width=\columnwidth]{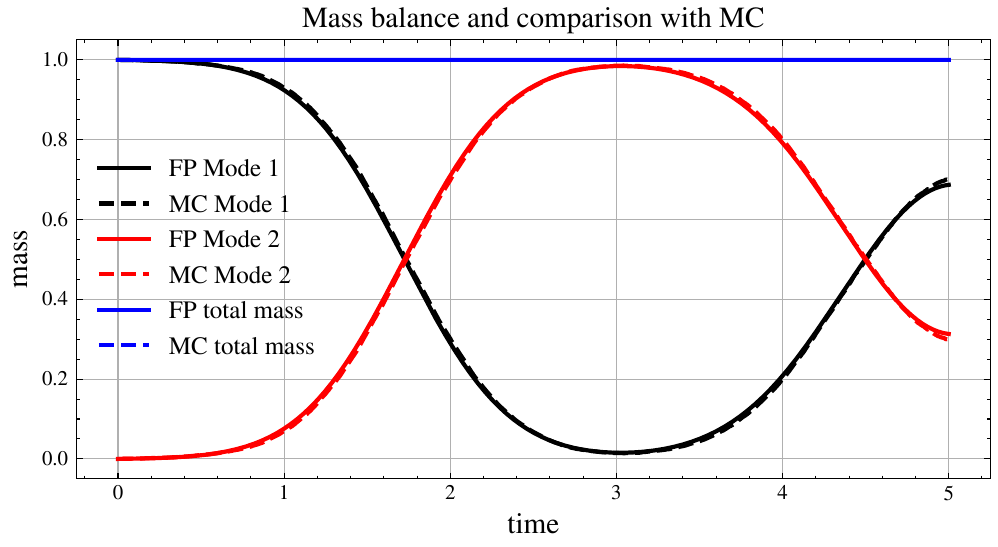}
	\caption{Probability mass evolution (solid: proposed hybrid Fokker-Planck equation, dashed: Monte-Carlo simulation)}
	\label{fig:total_mass}
\end{figure}

\section{Conclusions}

This paper presented a unified formulation of the hybrid Fokker--Planck equation for stochastic hybrid systems with reset maps that may change the dimension of the continuous state across modes.
By adopting a flux-driven, measure-theoretic perspective, probability transfer between modes is characterized through the mapping of boundary flux under the reset mechanism.
This provides a consistent description of uncertainty propagation that accommodates both smooth density contributions and singular source terms arising from dimension-changing transitions.

The proposed framework establishes a foundation for analyzing and computing probability evolution in stochastic hybrid systems with complex reset structures.
Future work includes extending the approach to higher-dimensional systems, incorporating control inputs, and developing efficient numerical methods for large-scale problems.

\bibliography{ref}
\bibliographystyle{IEEEtran}

\end{document}